\newfont{\teneufm}{eufm10}
\newfont{\seveneufm}{eufm7}
\newfont{\fiveeufm}{eufm5}
\def\bbbc{{\mathchoice {\setbox0=\hbox{$\displaystyle\rm C$}\hbox{\hbox
to0pt{\kern0.4\wd0\vrule height0.9\ht0\hss}\box0}}
{\setbox0=\hbox{$\textstyle\rm C$}\hbox{\hbox
to0pt{\kern0.4\wd0\vrule height0.9\ht0\hss}\box0}}
{\setbox0=\hbox{$\scriptstyle\rm C$}\hbox{\hbox
to0pt{\kern0.4\wd0\vrule height0.9\ht0\hss}\box0}}
{\setbox0=\hbox{$\scriptscriptstyle\rm C$}\hbox{\hbox
to0pt{\kern0.4\wd0\vrule height0.9\ht0\hss}\box0}}}}
\def\bbbq{{\mathchoice {\setbox0=\hbox{$\displaystyle\rm
Q$}\hbox{\raise 0.15\ht0\hbox to0pt{\kern0.4\wd0\vrule
height0.8\ht0\hss}\box0}} {\setbox0=\hbox{$\textstyle\rm
Q$}\hbox{\raise 0.15\ht0\hbox to0pt{\kern0.4\wd0\vrule
height0.8\ht0\hss}\box0}} {\setbox0=\hbox{$\scriptstyle\rm
Q$}\hbox{\raise 0.15\ht0\hbox to0pt{\kern0.4\wd0\vrule
height0.7\ht0\hss}\box0}} {\setbox0=\hbox{$\scriptscriptstyle\rm
Q$}\hbox{\raise 0.15\ht0\hbox to0pt{\kern0.4\wd0\vrule
height0.7\ht0\hss}\box0}}}}
\def\bbbt{{\mathchoice {\setbox0=\hbox{$\displaystyle\rm
T$}\hbox{\hbox to0pt{\kern0.3\wd0\vrule height0.9\ht0\hss}\box0}}
{\setbox0=\hbox{$\textstyle\rm T$}\hbox{\hbox
to0pt{\kern0.3\wd0\vrule height0.9\ht0\hss}\box0}}
{\setbox0=\hbox{$\scriptstyle\rm T$}\hbox{\hbox
to0pt{\kern0.3\wd0\vrule height0.9\ht0\hss}\box0}}
{\setbox0=\hbox{$\scriptscriptstyle\rm T$}\hbox{\hbox
to0pt{\kern0.3\wd0\vrule height0.9\ht0\hss}\box0}}}}
\def\bbbs{{\mathchoice
{\setbox0=\hbox{$\displaystyle     \rm S$}\hbox{\raise0.5\ht0\hbox
to0pt{\kern0.35\wd0\vrule height0.45\ht0\hss}\hbox
to0pt{\kern0.55\wd0\vrule height0.5\ht0\hss}\box0}}
{\setbox0=\hbox{$\textstyle        \rm S$}\hbox{\raise0.5\ht0\hbox
to0pt{\kern0.35\wd0\vrule height0.45\ht0\hss}\hbox
to0pt{\kern0.55\wd0\vrule height0.5\ht0\hss}\box0}}
{\setbox0=\hbox{$\scriptstyle      \rm S$}\hbox{\raise0.5\ht0\hbox
to0pt{\kern0.35\wd0\vrule height0.45\ht0\hss}\raise0.05\ht0\hbox
to0pt{\kern0.5\wd0\vrule height0.45\ht0\hss}\box0}}
{\setbox0=\hbox{$\scriptscriptstyle\rm S$}\hbox{\raise0.5\ht0\hbox
to0pt{\kern0.4\wd0\vrule height0.45\ht0\hss}\raise0.05\ht0\hbox
to0pt{\kern0.55\wd0\vrule height0.45\ht0\hss}\box0}}}}
\def\bbbz{{\mathchoice {\hbox{$\sf\textstyle Z\kern-0.4em Z$}}
{\hbox{$\sf\textstyle Z\kern-0.4em Z$}} {\hbox{$\sf\scriptstyle
Z\kern-0.3em Z$}} {\hbox{$\sf\scriptscriptstyle Z\kern-0.2em
Z$}}}}
\newtheorem{theorem}{Theorem}
\newtheorem{lemma}[theorem]{Lemma}
\newtheorem{cor}[theorem]{Corollary}
\def\squareforqed{\hbox{\rlap{$\sqcap$}$\sqcup$}}
\def\qed{\ifmmode\squareforqed\else{\unskip\nobreak\hfil
\penalty50\hskip1em\null\nobreak\hfil\squareforqed
\parfillskip=0pt\finalhyphendemerits=0\endgraf}\fi}
\def\cA{{\mathcal A}}
\def\cI{{\mathcal I}}
\def\cJ{{\mathcal J}}
\def \sf {\mathfrak s}
\newcommand{\ignore}[1]{}
\def \F{\mathbb{F}}
\def \Z{\mathbb{Z}}
\def \Z{\mathbb{Z}}
\def\mand{\qquad\mbox{and}\qquad}
\def\\{\cr}
\def\({\left(}
\def\){\right)}
\begin{document}


\title[Exponential Pseudorandom Number Generator]{Periodic Structure of the Exponential Pseudorandom Number Generator}

 \author[J. Kaszi{\'a}n] {Jonas Kaszi{\'a}n}

\address{Department of Mathematics, RWTH Aachen, 
52056 Aachen, Germany}
\email{jonas.kaszian@rwth-aachen.de}

 \author[P.  Moree] {Pieter Moree}

\address{Max-Planck-Institut f{\"u}r Mathematik, Vivatsgasse 7, D-53111 Bonn, Germany}
\email{moree@mpim-bonn.mpg.de}

 \author[I. E. Shparlinski] {Igor E. Shparlinski}

\address{Department of Pure Mathematics, University of New South Wales,
Sydney, NSW 2052, Australia}
\email{igor.shparlinski@unsw.edu.au}

\begin{abstract} We investigate the periodic  structure of the
exponential pseudorandom number generator obtained from 
the map $x\mapsto g^x\pmod p$ that 
acts on the set $\{1, \ldots, p-1\}$. 
\end{abstract}

\keywords{finite field, exponential map, exponential pseudorandom number generator}
\subjclass[2010]{11K45, 11T71, 94A60}
%

\maketitle

\section{Introduction}

\subsection{Motivation and our results}

Given a prime $p$ and an integer $g$  with $p\nmid g$ 
and an initial value $u_0\in \{1, \ldots, p-1\}$ we consider 
the sequence $\{u_n\}$ generated recursively by
\begin{equation}
\label{eq:seq u}
u_n \equiv g^{u_{n-1}} \pmod p,
\quad 1 \le u_n \le p-1, \qquad n = 1,2, \ldots\,,
\end{equation}
and then, for an integer parameter $k\ge 1$, we consider the sequence of 
integers $\xi^{(k)}_{n}\in \{0, \ldots, 2^k-1\}$ formed by the $k$ least significant 
bits of $u_n$, $n =0,1, \ldots$. This construction is called
the {\it exponential pseudorandom number generator\/} and 
has numerous cryptographic applications, see~\cite{FSS,Gen,GoldRos,Lag,PaSu,SJQ}
and references therein. 
Certainly, for the exponential pseudorandom number generator, 
as for any other pseudorandom number generator, 
the question of periodicity is of primal interest.

More precisely, the sequence $\{u_n\}$, as any other sequence
generated iterations of a function  on a finite set, becomes eventually periodic 
with some {\it cycle length\/} $t$. That is, there is some integer 
$s\ge 0$ such that 
\begin{equation}
\label{eq:cycle}
u_{n} = u_{n+t}, \qquad n = s, s+1,  \ldots.
\end{equation}
We always assume that $t$ is
the smallest positive integer with this property. Furthermore, 
the sequence $u_0, \ldots, u_{s+t-1}$ of length $\ell = s+t$,
where $t\ge 1$ and then $s\ge 0$ are chosen to be the 
smallest possible integers to satisfy~\eqref{eq:cycle}, is called
the {\it trajectory\/} of  $\{u_n\}$ and consists of the {\it tail\/}
$u_0, \ldots, u_{s-1}$ and the {\it cycle\/} $u_s, \ldots, u_{s+t-1}$. 

Clearly, we always have $\ell \le T$ where $T$ is the multiplicative
order of $g$ modulo $p$. 

Since  the sequence $\{u_n\}$ becomes eventually periodic with some 
cycle length $t$, so does the sequence $\{\xi^{(k)}_n\}$ and 
its cycle length $\tau_k$ divides $t$. 

We further remark that if $g$ is a primitive root modulo $p$, 
then the map $x\mapsto g^x\pmod p$  
acts bijectively on the set $\{1, \ldots, p-1\}$ or in other
words defines an element of the symmetric group $S_{p-1}$. Therefore, 
in this case the sequence $\{u_n\}$ is purely periodic, 
that is,~\eqref{eq:cycle} holds with $s=0$. This also means that 
in this case the sequence $\{\xi^{(k)}_n\}$ is purely periodic. 

As usual let $\varphi$ denote Euler's totient function. Recall that
there are exactly $\varphi(p-1)$ primitive roots modulo $p$.
The above map leads to precisely $\varphi(p-1)$ different elements
of $S_{p-1}$. The question is to what extent these $\varphi(p-1)$
permutations represent `generic permutations of $S_{p-1}$'. Note
that the cardinality $(p-1)!$ of $S_{p-1}$ is vastly larger than $\varphi(p-1)$ which
on average behaves as a constant times $p$.
   
Unfortunately there are essentially no theoretic results
about the behaviour of either of the sequences  $\{u_n\}$ and
$\{\xi^{(k)}_n\}$. 
In fact even the distribution of $t$ has not been properly 
investigated. 
If $g$ is a primitive root, which is the most interesting case
for cryptographic applications, then heuristically, the periodic 
behaviour of the sequence 
 $\{u_n\}$ can be modelled as  a random permutation
on the set $\{1, \ldots, p-1\}$, see~\cite{ABT} for a
wealth of results about random permutations. For example, 
by a result of~\cite{ShLl} one expects that 
$t = p^{1+o(1)}$ in this case.
If $g$ is not a primitive root it is not clear what
the correct 
statistical model describing the map  $x\mapsto g^x\pmod p$ 
should be. Probably, if $g$ is of order $T$ modulo $p$,
then one can further reduce  the residue $g^x\pmod p$ 
modulo $T$ and consider the associated permutation on the set 
$\{1, \ldots, T\}$ generated by the map  
$$
x\mapsto \(g^x\pmod p\)\pmod T. 
$$ 
This suggests that in this case one expects $t = T^{1 + o(1)}$, but 
the sequence  $\{u_n\}$ is not necessary purely periodic
anymore.

For the sequence  $\{\xi^{(k)}_n\}$ 
it is probably natural to expect that $\tau_k = t$ 
in the overwhelming 
majority of the cases (and for a wide range of values 
of $k$), but this question has not been properly addressed in the literature.

The only theoretic result here seems to be 
the bound of~\cite{FrSh} relating $t$ and $\tau_k$. 
First, as in~\cite[Section~5]{FrSh} we note that there
are at most 
$p2^{-k}+1$ 
integers $v \in \{1, \ldots, p-1\}$ with a given string 
of $k$  least significant bits. Hence, 
if $2^k < p$ then 
obviously 
\begin{equation}
\label{eq:triv bound}
\tau_k \ge  t 2^{k-1} /p. 
\end{equation} 
If $k \le (1/4 - \varepsilon) r$
for any fixed $\varepsilon >0$, 
where $r$ is the bit length of $p$, then it is shown in~\cite[Section~5]{FrSh}
that using bounds of exponential sums one can 
improve~\eqref{eq:triv bound} to 
\begin{equation}
\label{eq:FS bound}
\tau_k \ge c(\varepsilon)   t2^{2k}/p,
\end{equation} 
where $c(\varepsilon)>0$ depends only on $\varepsilon>0$. 
Clearly the bound~\eqref{eq:FS bound} trivially implies that for $k \ge r/4$ we have
\begin{equation}
\label{eq:FS bound large k}
\tau_k \ge   t p^{-1/2 + o(1)}, 
\end{equation} 
which however is weaker than~\eqref{eq:triv bound}
for $k \ge r/2$.

In this paper we use some results of~\cite{BGKS2} on the concentration of 
solutions of exponential congruences to 
sharpen~\eqref{eq:triv bound}, \eqref{eq:FS bound}
and~\eqref{eq:FS bound large k} 
for $k \ge (3/8 + \varepsilon) r$. 

We also use the same method to establish a lower bound for the number of 
distinct values in the sequence $\{\xi^{(k)}_n\}$.
Finally, we also show that for large values of $k$ the 
modern results on the sum-product problem (see~\cite{BuTs}) lead to better estimates.

Our results relate $\tau_k$ and $t$ and are meaningful 
only when $t$ is sufficiently large. 
Since no theoretic results about large values of 
$t$ are known, we study the behaviour of $t$ empirically. Our findings
are consistent with the map $x \mapsto g^x \pmod p$ having a generic
cycle structure. In particular, the results of our numerical tests 
exhibit a reasonable agreement with  those
predicted for random permutations, see~\cite{ABT}.

\subsection{Previously known results}

Here we briefly review several previously known results about the 
cycle structure of the map $x \mapsto g^x \pmod p$. Essentially only very short 
cycles, such as fixed points, succumb to the efforts of getting rigorous
results.

In particular, for an integer $k$ we denote by 
$N_{p,g}(k)$ the number of $u_0 \in \{1, \ldots, p-1\}$
such that for the sequence~\eqref{eq:seq u} we
have $u_k = u_0$. 
Note that $N_{p,g}(1)$ is the number of 
fixed points of the map $x \mapsto g^x \pmod p$.

The quantity $N_{p,g}(k)$  for $k=1,2,3$ has recently been
studied in~\cite{BKS1,BKS2,CobZah,GlShp,Hold,HoldMor1,HoldMor2,LPS,Zhang}. 
Fixed points with various restrictions on 
$u$ have been considered as well. 
For example, Cobeli and  Zaharescu~\cite{CobZah} have shown 
that  
\begin{equation*}
\begin{split}
\# \{(g, u) ~:~1\le g, u \le p-1,\ 
 &\gcd(u,p-1)=1, \ g^u \equiv u \pmod p\} \\
&  = \frac{\varphi(p-1)^2}{p-1}  + O\(\tau(p-1)p^{1/2}\log p\),
\end{split}
\end{equation*}
where $\tau(m)$ is the
number of positive integer divisors of $m\ge 1$. 
Unfortunately, the co-primality condition $\gcd(u,p-1)=1$ is 
essential for the method of~\cite{CobZah}, thus that result does not 
immediately extend to all $u \in  \{1, \ldots, p-1\}$.
Several more results and conjectures of similar flavour are 
presented by Holden and  Moree~\cite{HoldMor2}. 
Furthermore, an asymptotic formula for 
the average value $N_{p,g}(1)$ on average over $p$ and all primitive 
roots $g \in  \{1, \ldots, p-1\}$, as well as, 
over all $g \in  \{1, \ldots, p-1\}$ is
given by Bourgain, Konyagin and Shparlinski~\cite[Theorems~13 and~14]{BKS1}:
$$
\sum_{p\le Q} \frac{1}{p-1} 
\sum_{\substack{g=1\\g~\text{primitive root}}}^{p-1} N_{p,g}(1)  =(A+ o(1))\pi(Q)
$$
and 
$$
\sum_{p\le Q} \frac{1}{p-1} \sum_{g=1}^{p-1} N_{p,g}(1) =(1+ o(1))\pi(Q)
$$
as $Q\to \infty$, where
$$
A =   \prod_{p~\mathrm{prime}} \(1
-\frac{1}{p(p-1)}\) = 0.373955 \ldots
$$
is {\it Artin's constant\/} and, as usual,
$\pi(Q)$ is the number of primes
$p\le Q$.
It is also shown in~\cite[Theorem~11]{BKS2} that 
$$
 \sum_{g=1}^{p-1} N_{p,g}(1) = O(p),
$$
however, the conjecture by Holden and  Moree~\cite{HoldMor2}
that
\begin{equation}
\label{eq:asymp}
 \sum_{g=1}^{p-1} N_{p,g}(1) = (1+ o(1))p
\end{equation} 
remains open. It is known though that 
$$
 \sum_{g=1}^{p-1} N_{p,g}(1) \ge p + O(p^{3/4+o(1)}),
$$ 
see~\cite[Equation~(1.15)]{BKS2}. It is also shown 
in~\cite[Section~5.9]{BKS2} that~\eqref{eq:asymp}
may fail only on a very thin set of primes. 

It is also known that $N_{p,g}(1)\leq \sqrt{2p}+1/2$ for any
$g \in  \{1, \ldots, p-1\}$, see~\cite[Theorem~2]{GlShp}.

For $N_{p,g}(2)$, the only known result is the bound 
$$
 N_{p,g}(2) \le C(g) \frac{p}{\log p}
$$
of Glebsky and Shparlinski~\cite[Theorem~3]{GlShp}, where $C(g)$ depends on $g$.

Finally, by~\cite[Theorem~3]{GlShp} we have
$$
N_g(3)\leq \frac{3}{4}p+\frac{g^{2g+1}+g+1}{4} 
$$
(which is certainly a very weak bound).

 \section{Preparations}

\subsection{Density of points on exponential curves}

Let $p$ be a prime and $a$, $b$ and $g$  integers satisfying $p\nmid abg$.
Given two intervals $\cI$ and $\cJ$,  we denote by
 $R_{a,b,g,p}(\cI ,\cJ)$  the number 
of integer solutions of the system of congruences
\begin{align*}
au \equiv x \pmod p&\mand  
bg^u \equiv y \pmod p, \\\ (u,x,y)&\in\{1, \ldots, p-1\} \times \cI \times  \cJ .
\end{align*}

Upper bounds on  $R_{1,b,g,p}(\cI ,\cJ)$ 
are given  in~\cite[Theorems~23 and~24]{BGKS2}, which in turn
improve and generalise the previous estimates of~\cite{ChShp,CillGar}.
We need the following straightforward 
generalisations of the estimates of~\cite[Theorems~23 and~24]{BGKS2}
to an arbitrary $a$ with $p\nmid a$.

\begin{lemma}
\label{lem:RIJ} Suppose that $p\nmid ab$ and that $T$ is the multiplicative order of $g$ modulo
$p$. Let  $\cI$ and $\cJ$ be two intervals
consisting of $K$ and $L$ consecutive integers respectively,
where $L\le T$.
Then 
$$
R_{a,b,g,p}(\cI ,\cJ) \le \(\frac{K}{p^{1/3}L^{1/6}} + 1\)L^{1/2+o(1)}
$$
and 
$$
R_{a,b,g,p}(\cI ,\cJ) \le \(\frac{K}{p^{1/8}L^{1/6}} + 1\)L^{1/3+o(1)}.
$$
\end{lemma}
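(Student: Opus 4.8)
The plan is to deduce the general estimate from the case $a=1$ of~\cite[Theorems~23 and~24]{BGKS2}, using only that $u\mapsto au\bmod p$ is a bijection of $\Z/p\Z$ when $p\nmid a$. First I would detect \emph{both} congruences in the definition of $R_{a,b,g,p}(\cI,\cJ)$ by additive characters. Writing $\ep(z)=\exp(2\pi i z/p)$, setting $A_\cI(h)=\sum_{x\in\cI}\ep(hx)$ and $A_\cJ(\ell)=\sum_{y\in\cJ}\ep(\ell y)$ for the Fourier transforms of the interval indicators, and putting
$$
S(m,\ell)=\sum_{u=1}^{p-1}\ep\(mu+\ell b g^u\),
$$
orthogonality of characters yields the exact identity
$$
R_{a,b,g,p}(\cI,\cJ)=\frac1{p^2}\sum_{h=0}^{p-1}\sum_{\ell=0}^{p-1}
\overline{A_\cI(h)}\,\overline{A_\cJ(\ell)}\,S(ah,\ell).
$$
The key observation is that the multiplier $a$ enters \emph{only} through the additive frequency $ah$ of the complete sum $S$; the interval $\cI$ contributes through the intrinsic coefficients $A_\cI(h)$, and $S$ is now a genuinely complete sum in which $u$ runs over all of $\{1,\ldots,p-1\}$.

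Next I would bound $R_{a,b,g,p}(\cI,\cJ)$ by the triangle inequality and run the argument that already proves the case $a=1$, now applied to
$$
\frac1{p^2}\sum_{h=0}^{p-1}\sum_{\ell=0}^{p-1}\bigl|A_\cI(h)\bigr|\,\bigl|A_\cJ(\ell)\bigr|\,\bigl|S(ah,\ell)\bigr|.
$$
One first separates the diagonal term $h=\ell=0$, which equals $(p-1)KL/p^2\ll KL/p$, is independent of $a$, and, since $L\le T\le p$, is dominated by the first term of each of the two claimed bounds. For the remaining range $(h,\ell)\neq(0,0)$ one applies Hölder's inequality to \emph{decouple} the interval factors $A_\cI,A_\cJ$ from the sum $S$; after this step $S$ enters only through a supremum $\max_{(h,\ell)\neq(0,0)}|S(ah,\ell)|$ or through power moments $\sum_{(h,\ell)\neq(0,0)}|S(ah,\ell)|^q$, while $\cI$ enters only through the norms $\sum_h|A_\cI(h)|$ and $\bigl(\sum_h|A_\cI(h)|^2\bigr)^{1/2}=(pK)^{1/2}$. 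Since $h\mapsto ah$ is a bijection of $\Z/p\Z$ fixing $0$, it permutes the punctured set $(h,\ell)\neq(0,0)$ among itself, so every such supremum and moment of $S$ is unchanged when $ah$ is replaced by $h$; and the norms of $A_\cI$ are intrinsic to $\cI$. Hence each numerical quantity output by the $a=1$ argument reappears unchanged, and the two inequalities follow, with $p^{o(1)}$ absorbing the logarithmic loss in $\sum_h|A_\cI(h)|$.

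The one point that must be \emph{verified} rather than asserted is that the proof of~\cite[Theorems~23 and~24]{BGKS2} genuinely decouples $\cI$ from the exponential sum in the above fashion, and in particular that it does not quietly use the accidental feature of the case $a=1$ that $au\equiv x\pmod p$ collapses to $u\equiv x$, making the summation variable $u$ run over the interval $\cI$ itself. Completing the $x$-congruence as well, as done above, removes this feature: $u$ always runs over the full range $\{1,\ldots,p-1\}$, and the actual arithmetic input---bounds for the complete sum $S(m,\ell)$, where the hypothesis $L\le T$ is used---depends on the additive frequency $m=ah$ only through whether it vanishes, a property preserved by the dilation. I therefore expect no genuine obstacle beyond this verification; the residual work is the routine bookkeeping of the $p^{o(1)}$ factors and the check, already indicated, that the diagonal term is absorbed.
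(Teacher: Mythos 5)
You have correctly identified the one point on which everything hinges, but your decision to ``expect no genuine obstacle'' there is where the proposal fails. The paper itself offers no proof of Lemma~\ref{lem:RIJ} --- it is asserted as a straightforward generalisation of \cite[Theorems~23 and~24]{BGKS2} --- so the only substance of your proposal is the proposed reduction mechanism, and that mechanism cannot work. First, the bounds of \cite[Theorems~23 and~24]{BGKS2} are not obtained by completing the congruences and estimating complete sums $S(m,\ell)=\sum_{u=1}^{p-1}\ep(mu+\ell bg^u)$; like the earlier results of \cite{ChShp,CillGar} which they improve, they come from combinatorial concentration arguments (differencing solutions and reducing to multiplicative congruences with variables from short intervals), so the ``decoupled'' structure you need is simply not there. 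Second, and decisively, no argument of the shape you describe \emph{can} yield bounds of this strength. After you puncture the frequency range and apply H\"older, the sum $S$ enters only through $\max_{(h,\ell)\neq(0,0)}|S(h,\ell)|$ or through moments, and these are bounded below by diagonal contributions: $\sum_{h,\ell}|S(h,\ell)|^{2}=p^{2}(p-1)$, so Cauchy--Schwarz already forces an error term of size at least $(KLp)^{1/2}$, while the sup norm is at least $p^{1/2}$ (this is precisely the route of \cite{FrSh}, and the paper explicitly contrasts Lemma~\ref{lem:RIJ} with the resulting bound $O\(p 2^{-2k}+p^{1/2}(\log p)^2\)$). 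The lemma, by contrast, asserts bounds such as $L^{1/3+o(1)}$ when $K\le p^{1/8}L^{1/6}$, carrying no positive power of $p$; that is unreachable from your identity no matter how the H\"older exponents are chosen.

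The generalisation from $a=1$ to general $a$ has to be performed inside the combinatorial argument of \cite{BGKS2}. The relevant observation is that the condition $au\equiv x\pmod p$, $x\in\cI$, is used there only through differences of solutions: if $(u_1,x_1,y_1)$ and $(u_2,x_2,y_2)$ are two solutions then $y_1y_2^{-1}\equiv g^{u_1-u_2}\pmod p$, and the integer $u_1-u_2\in(-(p-1),p-1)$ is one of at most two lifts of $a^{-1}(x_1-x_2)\bmod p$; since $x_1-x_2$ takes at most $2K-1$ values, the ratios $y_1y_2^{-1}$ are still confined to $O(K)$ values of the form $g^v$, exactly as in the case $a=1$, and the factor $b$ cancels in the ratio. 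What must then be checked is that the remainder of the argument uses only this count together with the interval structure of $\cJ$ (which is where $L\le T$ enters). That verification is the actual content of the lemma and is absent from your proposal. A secondary point: the lemma's error factors are $L^{o(1)}$, not $p^{o(1)}$, so logarithmic losses in $\sum_h|A_\cI(h)|$ cannot simply be ``absorbed'' when $L$ is small compared to $p$.
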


For intervals $\cI$ and $\cJ$ of the same length, we derive a more explicit form of Lemma~\ref{lem:RIJ}:

\begin{cor}
\label{cor:RIJ} Assume that $g$ is of multiplicative order $T$ modulo
$p$ and that $a$ and $b$ are integers such that $p\nmid ab$. 
Let  $\cI$ and $\cJ$ be two intervals
consisting of $H$ consecutive integers respectively,
where $H\le T$.
Then 
$$
R_{a,b,g,p}(\cI ,\cJ) \le H^{o(1)}
\left\{  \begin{array}{ll}
H^{1/3} ,& {\text{if}}\ H \le p^{3/20},\\
H^{7/6} p^{-1/8} , & {\text{if}}\ p^{3/20} < H \le p^{3/16},\\
H^{1/2} , & {\text{if}}\ p^{3/16} < H \le p^{2/5},\\
H^{4/3}p^{-1/3} , & {\text{if}}\ p^{2/5} < H.
\end{array} \right.
$$
\end{cor}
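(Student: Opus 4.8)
The plan is to derive Corollary~\ref{cor:RIJ} from Lemma~\ref{lem:RIJ} by specialising to the case $K=L=H$ and then, for each range of $H$, selecting whichever of the two bounds is stronger. Writing $f_1(H)$ and $f_2(H)$ for the two estimates of Lemma~\ref{lem:RIJ} with $K=L=H$, we have
\begin{equation*}
f_1(H) = \(\frac{H}{p^{1/3}H^{1/6}} + 1\)H^{1/2+o(1)}
= \(H^{5/6}p^{-1/3} + 1\)H^{1/2+o(1)},
\end{equation*}
and similarly $f_2(H) = \(H^{5/6}p^{-1/8} + 1\)H^{1/3+o(1)}$. In each case the factor in parentheses is governed by whether the first or the second summand dominates, i.e.\ whether $H^{5/6} \le p^{1/3}$ (resp.\ $H^{5/6}\le p^{1/8}$), equivalently $H \le p^{2/5}$ (resp.\ $H \le p^{3/20}$). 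This gives a piecewise description of each $f_i$ with a breakpoint, and the corollary's four ranges arise from overlaying these two breakpoints together with the crossover point between $f_1$ and $f_2$.

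The main work is a routine but careful case analysis. First I would record the simplified forms: for $H \le p^{2/5}$ the bound $f_1$ reads $H^{1/2+o(1)}$, while for $H > p^{2/5}$ it reads $H^{4/3}p^{-1/3+o(1)}$; for $H \le p^{3/20}$ the bound $f_2$ reads $H^{1/3+o(1)}$, while for $H > p^{3/20}$ it reads $H^{7/6}p^{-1/8+o(1)}$. Then, on each subinterval determined by the three thresholds $p^{3/20}$, $p^{3/16}$, $p^{2/5}$, I would compare the two available estimates and keep the smaller. For $H\le p^{3/20}$ the comparison is $H^{1/3}$ against $H^{1/2}$, and $H^{1/3}$ wins. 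For $p^{3/20}<H\le p^{3/16}$ one compares $H^{7/6}p^{-1/8}$ against $H^{1/2}$; the first is smaller precisely when $H^{2/3}\le p^{1/8}$, i.e.\ $H\le p^{3/16}$, which is exactly the stated upper endpoint, so $f_2$ is the better bound here. For $p^{3/16}<H\le p^{2/5}$ the same comparison now favours $H^{1/2}$, which is the content of the third line. Finally for $H>p^{2/5}$ only $f_1$ remains in its large-$H$ regime, giving $H^{4/3}p^{-1/3}$.

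I expect the only genuine obstacle to be bookkeeping: one must verify that the three thresholds $p^{3/20}$, $p^{3/16}$, $p^{2/5}$ are consistent (indeed $3/20 < 3/16 < 2/5$) and that at each breakpoint the two adjacent expressions agree up to the $H^{o(1)}$ factor, so that the piecewise bound is continuous and no gap or overlap is introduced. The crossover $H = p^{3/16}$ between the second and third lines is the delicate one, since it comes from equating $H^{7/6}p^{-1/8}$ with $H^{1/2}$ rather than from a breakpoint of a single $f_i$; I would double-check that computation explicitly. All the $o(1)$ terms absorb harmlessly into the single factor $H^{o(1)}$ in front, using $\log L \le \log T \le \log p$ to express everything uniformly, so no new analytic input beyond Lemma~\ref{lem:RIJ} is needed.
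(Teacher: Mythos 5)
Your proposal is correct and is exactly the derivation the paper intends (the paper gives no explicit proof, presenting the corollary simply as the specialisation $K=L=H$ of Lemma~\ref{lem:RIJ} followed by taking the minimum of the two bounds on each range), and all your threshold computations check out. The only point stated a little loosely is the last range $H>p^{2/5}$, where the second bound $H^{7/6}p^{-1/8}$ is still available and one should note that $H^{4/3}p^{-1/3}\le H^{7/6}p^{-1/8}$ is equivalent to $H\le p^{5/4}$, which always holds since $H\le T<p$.
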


\subsection{Sum-product problem}

For a prime $p$, we denote by $\F_p$ the finite field of $p$ elements. 

Given a set $\cA\subseteq \F_p$ 
we define the sets
$$
2\cA =\{a_1+a_2 \ : \ a_1,  a_2 \in \cA\}
\quad \text{and} \quad 
\cA^2 =\{a_1\cdot a_2 \ : \ a_1,  a_2 \in \cA\}.
$$

The celebrated result of  Bourgain, Katz and Tao~\cite{BKT} asserts
that at least one of the cardinalities $\#\(\cA^2\)$ and 
$\#\(2\cA\)$ is always large.

The current state of affairs regarding  quantitative  versions of this result, 
due to several authors, has been summarised by 
Bukh  and Tsimerman~\cite{BuTs} as follows:

\begin{lemma}
\label{lem:AA} For an arbitrary set $\cA \subseteq \F_p$, we have
\begin{align*}
\max \{\#\(\cA^2\), \#\(2\cA\)\} & \\
\ge \(\# \cA \)^{o(1)} &
\left\{\begin{array}{ll}
\(\# \cA \)^{12/11},& \text{if $\# \cA \le p^{1/2}$,}\\
\(\# \cA \)^{7/6}p^{-1/24},&
 \text{if $p^{1/2}\le  \# \cA \le p^{35/68}$,}\\
\(\# \cA \)^{10/11}p^{1/11},&
 \text{if $p^{35/68}\le  \# \cA \le p^{13/24}$,}\\
 \(\# \cA \)^{2}p^{-1/2},&
 \text{if $p^{13/24}\le  \# \cA \le p^{2/3}$,}\\
 \(\# \cA\)^{1/2}p^{1/2},&
 \text{if $\# \cA \ge p^{2/3}$.}
\end{array}
\right.
\end{align*}
\end{lemma}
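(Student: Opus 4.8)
Since Lemma~\ref{lem:AA} only records, in the convenient piecewise form of Bukh and Tsimerman~\cite{BuTs}, the current best quantitative version of the sum--product phenomenon of Bourgain, Katz and Tao~\cite{BKT}, the honest ``proof'' here is a pointer to~\cite{BuTs}. Were one to reconstruct it, the plan would be to run the \emph{energy--incidence method} uniformly and then read off the table. For $\cA\subseteq\F_p$ one introduces the additive and multiplicative energies
$$
E_{+}(\cA)=\#\{(a,b,c,d)\in\cA^4:\ a+b=c+d\},\qquad
E_{\times}(\cA)=\#\{(a,b,c,d)\in\cA^4:\ ab=cd\}.
$$
By Cauchy--Schwarz one has $\#(\cA+\cA)\ge (\#\cA)^4/E_{+}(\cA)$ and $\#(\cA^2)\ge (\#\cA)^4/E_{\times}(\cA)$, so it suffices to show that $E_{+}(\cA)$ and $E_{\times}(\cA)$ cannot simultaneously be close to the trivial value $(\#\cA)^3$; every entry of the table then follows by substituting the resulting energy estimate into these two inequalities and optimising.

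The engine behind all the non-trivial rows is a sharp point--line incidence bound over $\F_p$. I would invoke Rudnev's point--plane incidence theorem in $\F_p^3$ and the Cartesian-product point--line estimate of Stevens and de~Zeeuw derived from it, which bounds the number of incidences $I(P,\mathcal L)$ between a grid $P=\cA\times\cA$ and a family of lines $\mathcal L$ by a term of shape $(\#P)^{3/4}(\#\mathcal L)^{3/4}$ plus lower-order contributions, valid as long as $\#P$ is not too large relative to $p$. Following Elekes, one takes the point set $(\cA+\cA)\times(\cA^2)$ together with the lines $y=b(x-a)$, $a,b\in\cA$; since each such line meets that grid in the $\#\cA$ points $(a+c,\,bc)$, $c\in\cA$, a lower bound $I\ge(\#\cA)^3$ is immediate, and combining it with the incidence upper bound produces a lower bound on the product $\#(\cA+\cA)\cdot\#(\cA^2)$, hence on the maximum. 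The sharper entries come from running the same geometric idea at the level of energies rather than of the raw incidence count.

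Finally, the five-case structure is exactly the record of optimising this machinery across the size of $\cA$. In the small range $\#\cA\le p^{1/2}$ the $\F_p$ incidence bound mimics its Euclidean analogue and yields the clean exponent $12/11$; the intermediate thresholds $p^{35/68}$ and $p^{13/24}$ mark the points where successive terms of the incidence estimate overtake one another and where the $p$-dependent ``wrap-around'' term begins to dominate. For $\#\cA\ge p^{2/3}$ the incidence input degenerates, and one instead argues directly: a second-moment (character-sum) computation shows that a set this large cannot have both energies near $(\#\cA)^3$, giving the saturation bound $(\#\cA)^{1/2}p^{1/2}$. The genuinely hard part is the analytic heart, namely establishing the sharp $\F_p$ incidence bound, since over a finite field the topological tools available over $\R$ are lost and one must rely on Rudnev's argument through the geometry of lines in $\mathrm{SL}_2$, together with the delicate bookkeeping required to check that the exponents match and are optimal at each of the four crossover points.
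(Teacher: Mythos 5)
Your first sentence is exactly what the paper does: Lemma~\ref{lem:AA} is stated without proof as a direct quotation of the summary in~\cite{BuTs}, so deferring to that citation is the ``same approach.'' Your optional reconstruction sketch is not how those particular exponents were obtained (Rudnev's point--plane theorem and the Stevens--de~Zeeuw incidence bound postdate the 2012 paper of Bukh and Tsimerman, whose table compiles earlier results resting on Garaev-type exponential-sum arguments for large sets and Katz--Shen/Bourgain--Garaev energy arguments for small ones), but since the paper offers no proof at all, this does not affect the comparison.
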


\section{Main Results} 

 \subsection{Period length}

For any $k \le r$ we now obtain an improvement of~\eqref {eq:triv bound}

\begin{theorem}
\label{thm:PRNG Per} 
For any $r$-bit prime $p$ and $g$ with  $p\nmid g$, we have 
$$
\tau_k \ge t p^{o(1)}
\left\{  \begin{array}{ll}
(2^k/p)^{1/3} ,& {\text{if}}\ k/r \ge 17/20 ,\\
2^{7k/6} p^{-25/24} , & {\text{if}}\   17/20  > k/r  \ge 13/16 ,\\
(2^k/p)^{1/2} , & {\text{if}}\ 13/16  > k/r  \ge 3/5 ,\\
2^{4k/3}p^{-1} , & {\text{if}}\ 3/5  >  k/r.
\end{array} \right.
$$
\end{theorem}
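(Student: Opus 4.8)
The plan is to bound the ratio $t/\tau_k$ from above by a concentration count for the map $x\mapsto g^x$ and then to feed that count into Corollary~\ref{cor:RIJ}. Since $\tau_k\mid t$, I work on the cycle $u_s,\ldots,u_{s+t-1}$, whose $t$ entries are pairwise distinct. To each index $n$ on the cycle associate the pair of bit-strings $(\xi^{(k)}_{n},\xi^{(k)}_{n+1})=(u_n\bmod 2^k,\ u_{n+1}\bmod 2^k)$. As $\{\xi^{(k)}_{n}\}$ has period $\tau_k$, so does this sequence of pairs; hence over the $t$ cycle indices it takes at most $\tau_k$ distinct values, and by pigeonhole some pair $(a,b)$ occurs for at least $t/\tau_k$ indices, that is, for at least $t/\tau_k$ distinct residues $u=u_n\in\{1,\ldots,p-1\}$.

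Next I translate these two low-bit conditions into interval conditions. Each selected $u$ satisfies $u\equiv a\pmod{2^k}$, and since $u_{n+1}\equiv g^{u_n}\pmod p$ it also satisfies $g^u\bmod p\equiv b\pmod{2^k}$. Writing $d=2^k$ and letting $\bar d$ be its inverse modulo $p$, as $v$ ranges over the integers in $\{1,\ldots,p-1\}$ congruent to a fixed residue modulo $2^k$ the value $\bar d v\bmod p$ ranges over $H:=\lfloor p/2^k\rfloor+O(1)$ consecutive residues. Thus both conditions become interval conditions $\bar d\,u\equiv x\pmod p$ with $x\in\cI$ and $\bar d\,g^u\equiv y\pmod p$ with $y\in\cJ$, where $\cI,\cJ$ are intervals of common length $H$. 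Each selected $u$ contributes a distinct solution, so $t/\tau_k\le R_{\bar d,\bar d,g,p}(\cI,\cJ)$.

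Applying Corollary~\ref{cor:RIJ} with intervals of common length $H$ and using $t/\tau_k\le R_{\bar d,\bar d,g,p}(\cI,\cJ)$ yields a lower bound $\tau_k\ge t/R$ in each of the four ranges for $H$. Because $p$ is $r$-bit we have the dictionary $H\asymp p/2^k=p^{1-k/r+o(1)}$, under which the ranges $H\le p^{3/20}$, $p^{3/20}<H\le p^{3/16}$, $p^{3/16}<H\le p^{2/5}$, $p^{2/5}<H$ correspond precisely to $k/r\ge 17/20$, $13/16\le k/r<17/20$, $3/5\le k/r<13/16$, $k/r<3/5$. Substituting $H\asymp p/2^k$ into the reciprocals of the four corollary bounds, namely into $H^{-1/3}$, $H^{-7/6}p^{1/8}$, $H^{-1/2}$, $H^{-4/3}p^{1/3}$, reproduces exactly the claimed factors $(2^k/p)^{1/3}$, $2^{7k/6}p^{-25/24}$, $(2^k/p)^{1/2}$, $2^{4k/3}p^{-1}$, the factor $H^{o(1)}$ being absorbed into $p^{o(1)}$.

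The two points needing care are as follows. First, the pigeonhole step must produce distinct values of $u$, each feeding exactly one triple into $R$; both hold because the $u_n$ over one period are distinct and, $\cI$ being shorter than $p$, the congruence $\bar d\,u\equiv x$ pins down a unique $x\in\cI$. Second, Corollary~\ref{cor:RIJ} requires $H\le T$, which is automatic in the only interesting regime $t\gtrsim p/2^k$ since $\tau_k\le t\le T$. The residual range $H>T$ is the main obstacle, and I would treat it by partitioning $\cJ$ into $O(H/T)$ subintervals of length at most $T$ and summing the bounds of Lemma~\ref{lem:RIJ}; this is the only place where genuine estimation, rather than bookkeeping, is involved, the matching of exponents across the four regimes being routine once the dictionary $H\asymp p^{1-k/r}$ is in place.
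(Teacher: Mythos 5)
Your argument is essentially the paper's proof: the same reduction of $t/\tau_k$ to a bound on $R_{\alpha,\alpha,g,p}(\cI,\cJ)$ for intervals of length $2^{r-k}$ obtained by multiplying by the inverse of $2^k$ modulo $p$, followed by the same application of Corollary~\ref{cor:RIJ} with $H=2^{r-k}$. The only differences are cosmetic or peripheral: the paper selects the explicit subsequence $u_{s\tau_k}$, $s=1,\ldots,t/\tau_k$ (all sharing one pair of low-bit strings by definition of $\tau_k$) instead of your pigeonhole over all cycle indices, and the paper silently ignores the proviso $H\le T$ that you correctly flag as the one genuine loose end.
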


\begin{proof} Recall that we have the divisibility $\tau_k \mid t$ and consider the sequence
$u_{s\tau_k}$ for  $s = 1, \ldots, t/\tau_k$.
By the definition of $\tau_k$, all these numbers end with the same string of
$k$ least significant bits. Furthermore,  this  is also true for
$u_{s\tau_k + 1} \equiv g^{u_{s\tau_k}} \pmod p$.  Therefore, there 
are some integers $\lambda, \mu \in [0, 2^k-1]$ so that 
$$
u_{s\tau_k} = 2^k v_s + \lambda \mand u_{s\tau_k + 1} = 2^k w_s +\mu
$$
for some integers $v_s, w_s \in [0, 2^{r-k}-1]$.

Hence, defining $\alpha \in [1, p-1]$ by the congruence 
$\alpha  2^k \equiv 1 \pmod p$,  we see that the residues 
modulo $p$ of $\alpha  u_{s\tau_k}$ and of $\alpha  g^{u_{s\tau_k}}$
belong to some intervals of $\cI$ and $\cJ$, respectively, 
of length $2^{r-k}$ each. 
Since $t\le T$, where $T$ is
the multiplicative order of $g$, for these intervals $\cI$ and $\cJ$ we have
$$
t/\tau_k \le R_{\alpha , \alpha,g,p}(\cI ,\cJ) .
$$
Using Corollary~\ref{cor:RIJ} with $H = 2^{r-k}$,
we conclude the proof. 
\end{proof}

Combining Theorem~\ref{thm:PRNG Per}  with~\eqref{eq:FS bound}
and~\eqref{eq:FS bound large k}
we derive 

\begin{cor}
\label{cor:PRNG Per} 
For any $r$-bit prime $p$ and $g$ with  $p\nmid g$, we have 
$$
\tau_k \ge t p^{o(1)}
\left\{  \begin{array}{ll}
(2^k/p)^{1/3} ,& {\text{if}}\ k/r \ge 17/20 ,\\
2^{7k/6} p^{-25/24} , & {\text{if}}\   17/20  > k/r  \ge 13/16 ,\\
(2^k/p)^{1/2} , & {\text{if}}\ 13/16  > k/r  \ge 3/5 ,\\
2^{4k/3}p^{-1} , & {\text{if}}\ 3/5  >  k/r \ge 3/8 ,\\
p^{-1/2} , & {\text{if}}\ 3/8  >  k/r  \ge 1/4  ,\\
2^{2k}p^{-1} , & {\text{if}}\ 1/4  > k/r   . 
\end{array} \right.
$$
\end{cor}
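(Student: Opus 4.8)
The plan is to assemble Corollary~\ref{cor:PRNG Per} directly from the four-case bound of Theorem~\ref{thm:PRNG Per} together with the two inequalities~\eqref{eq:FS bound} and~\eqref{eq:FS bound large k} imported from~\cite{FrSh}. The first four cases, covering $k/r \ge 3/8$, are already exactly the content of Theorem~\ref{thm:PRNG Per}, so for that whole range there is nothing to prove beyond copying the statement. The only new work lies in supplying bounds for the smaller range $k/r < 3/8$, where I would invoke the exponential-sum estimates of~\cite{FrSh} rather than the concentration results of~\cite{BGKS2}.

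For the range $1/4 \le k/r < 3/8$, I would read off the bound~\eqref{eq:FS bound large k}, namely $\tau_k \ge t\,p^{-1/2 + o(1)}$, which is precisely the fifth case $p^{-1/2}$ (absorbing the $o(1)$ into the overall $p^{o(1)}$ factor already present in the corollary). For the smallest range $k/r < 1/4$, I would instead use~\eqref{eq:FS bound}, which states $\tau_k \ge c(\varepsilon)\,t\,2^{2k}/p$ valid whenever $k \le (1/4 - \varepsilon)r$; taking $\varepsilon \to 0$ and again absorbing the constant $c(\varepsilon)$ and the transition into the $p^{o(1)}$ factor yields the sixth case $2^{2k}p^{-1}$. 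The boundary $k/r = 1/4$ is covered by either estimate, so the six cases fit together without a gap.

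The conceptual point to verify is merely bookkeeping: that each of the six piecewise bounds is the one that the corresponding source inequality provides on its declared subinterval, and that the $o(1)$ and constant factors from~\cite{FrSh} are compatible with the uniform $p^{o(1)}$ prefactor. Since~\eqref{eq:FS bound} is stated with an explicit constant $c(\varepsilon)$ depending on $\varepsilon$, I would note that any fixed $\varepsilon > 0$ suffices to realize the stated bound on the slightly shrunken range $k/r < 1/4 - \varepsilon$, and the open boundary of the corollary's sixth case accommodates this. The main obstacle, such as it is, is nothing more than confirming that~\cite{FrSh} genuinely yields~\eqref{eq:FS bound} uniformly in the stated form and that no hidden dependence on $k$ or $p$ escapes the $p^{o(1)}$ bookkeeping; the analytic heavy lifting has already been done, both in Theorem~\ref{thm:PRNG Per} and in the cited work, so the proof is a short concatenation of cases.
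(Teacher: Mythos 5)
Your proposal is correct and matches the paper exactly: the corollary is obtained by keeping the four cases of Theorem~\ref{thm:PRNG Per} for $k/r \ge 3/8$ and patching in the bounds~\eqref{eq:FS bound large k} and~\eqref{eq:FS bound} from~\cite{FrSh} on the two remaining ranges, which is precisely the one-line combination the paper performs. Your additional remarks on the $\varepsilon$-dependence and the crossover at $k/r = 3/8$ (where $2^{4k/3}p^{-1}$ degrades below $p^{-1/2}$) are sound bookkeeping.
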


\subsection{The number of distinct values}
\label{distinkt}
We now obtain a lower bound on the number $\nu_k(N)$ of distinct values 
which appear among the elements $\xi^{(k)}_n$, $n = 0, \ldots, N-1$. 
Let $\ell = s+t$ be the trajectory length of the sequence $\{u_n\}$,
see~\eqref{eq:cycle}. 

Note that if $2^k< p$ then the following analogue of~\eqref{eq:triv bound} holds:
\begin{equation}
\label{eq:triv bound val}
\nu_k(N) \ge N 2^{k-1}/p.
\end{equation}
In fact for $N = \ell= p^{1+o(1)}$
the  bound~\eqref{eq:triv bound val} is asymptotically 
optimal as we obviously have $\nu_k(N) \le 2^k$. However for 
smaller values of $\ell$ we obtain a series
of other bounds.

\begin{theorem}
\label{thm:Val Set 1} 
For any $r$-bit prime $p$ and $g$ with   $p\nmid g$, we have 
$$
\nu_k(N) \ge N^{1/2} p^{o(1)}
\left\{  \begin{array}{ll}
(2^k/p)^{1/6} ,& {\text{if}}\ 1 \ge k/r \ge 17/20 ,\\
2^{7k/12} p^{-25/48} , & {\text{if}}\   17/20  > k/r  \ge 13/16 ,\\
(2^k/p)^{1/4} , & {\text{if}}\ 13/16  > k/r  \ge 3/5 ,\\
2^{2k/3}p^{-1/2} , & {\text{if}}\ 3/5  >  k/r,
\end{array} \right.
$$
for all $N \le \ell$. 
\end{theorem}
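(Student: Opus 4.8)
The plan is to adapt the proof of Theorem~\ref{thm:PRNG Per}, replacing the divisibility relation $\tau_k \mid t$ by a pigeonhole argument on \emph{consecutive pairs} of truncations. The guiding observation is that the four multiplicative factors in the present statement are exactly the square roots of the corresponding factors in Theorem~\ref{thm:PRNG Per}. This square root is precisely what a two-coordinate pigeonhole will produce: passing to pairs $(\xi^{(k)}_n,\xi^{(k)}_{n+1})$ creates at most $\nu_k(N)^2$ cells rather than a single quantity, and it is essential, since a single truncation $\xi^{(k)}_n\equiv\lambda$ constrains only $u_n$ and does not bring the exponential map into play, whereas a pair constrains both $u_n$ and $g^{u_n}$ modulo $2^k$.

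First I would record that, since $N\le\ell$ where $\ell=s+t$ is the trajectory length from~\eqref{eq:cycle}, the values $u_0,\ldots,u_{N-1}$ are pairwise distinct. This is the only place the hypothesis $N\le\ell$ enters, and it is what upgrades a counting inequality into a genuine lower bound on the number of solutions of the relevant congruence system. Next I would consider the $N-1$ pairs $(\xi^{(k)}_n,\xi^{(k)}_{n+1})$ for $n=0,\ldots,N-2$. Each coordinate assumes at most $\nu_k(N)$ distinct values, so there are at most $\nu_k(N)^2$ distinct pairs; by pigeonhole some pair, corresponding to fixed choices $\lambda,\mu\in[0,2^k-1]$ of the $k$ least significant bits of $u_n$ and $u_{n+1}$, occurs for a set $S$ of indices with $|S|\ge (N-1)/\nu_k(N)^2$.

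For every $n\in S$ we have $u_n\equiv\lambda\pmod{2^k}$ and $u_{n+1}=g^{u_n}\equiv\mu\pmod{2^k}$, so, exactly as in the proof of Theorem~\ref{thm:PRNG Per}, defining $\alpha$ by $\alpha 2^k\equiv 1\pmod p$ the residues of $\alpha u_n$ and of $\alpha g^{u_n}$ lie in fixed intervals $\cI$ and $\cJ$ of length $2^{r-k}$ each. Because the $u_n$ with $n\in S$ are distinct, they give $|S|$ distinct solutions of the congruence system counted by $R_{\alpha,\alpha,g,p}(\cI,\cJ)$, whence
$$
\frac{N-1}{\nu_k(N)^2}\le R_{\alpha,\alpha,g,p}(\cI,\cJ).
$$
Applying Corollary~\ref{cor:RIJ} with $H=2^{r-k}$, solving for $\nu_k(N)$, and using $2^r=p^{1+o(1)}$ to re-express the bounds in terms of $2^k$ and $p$, one obtains the four stated cases; their thresholds $17/20,\,13/16,\,3/5$ are inherited verbatim from Corollary~\ref{cor:RIJ} through the substitution $H=2^{r-k}$.

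I expect the only genuinely delicate point to be the distinctness step: one must be certain that distinct indices in $S$ yield distinct values $u_n$, so that the pigeonhole count lower-bounds $R_{\alpha,\alpha,g,p}(\cI,\cJ)$ and not merely the number of indices. This is exactly what $N\le\ell$ guarantees. Everything after the reduction to $R_{\alpha,\alpha,g,p}(\cI,\cJ)$ is a mechanical substitution into Corollary~\ref{cor:RIJ}, identical to the closing step of Theorem~\ref{thm:PRNG Per}.
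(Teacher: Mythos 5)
Your proposal is correct and follows essentially the same route as the paper: pigeonhole on the pairs $(\xi^{(k)}_n,\xi^{(k)}_{n+1})$ to find a pair $(\lambda,\mu)$ occurring at least $N/\nu_k^2(N)$ times, reduce to a bound on $R_{\alpha,\alpha,g,p}(\cI,\cJ)$ for intervals of length $2^{r-k}$ exactly as in Theorem~\ref{thm:PRNG Per}, and apply Corollary~\ref{cor:RIJ} with $H=2^{r-k}$. Your extra remarks on why $N\le\ell$ guarantees distinctness of the $u_n$, and on the square-root relationship to Theorem~\ref{thm:PRNG Per}, are accurate and only make explicit what the paper leaves implicit.
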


\begin{proof}
Consider the pairs $(\xi^{(k)}_n, \xi^{(k)}_{n+1})$, 
$n = 0, \ldots, N-1$. Then at least one pair $(\lambda, \mu)$ 
appears at least $N/\nu_k^2(N)$ times. Since $N \le \ell< T$, where $T$ is
the multiplicative order of $g$, as in the proof of Theorem~\ref{thm:PRNG Per}
we obtain
$$
N/\nu_k^2(N) \le R_{\alpha , \alpha,g,p}(\cI ,\cJ) 
$$
for some intervals $\cI$ and $\cJ$ of length $2^{r-k}$ each and 
some integer $\alpha \in \{1, \ldots, p-1\}$. 
Using Corollary~\ref{cor:RIJ} with $H = 2^{r-k}$,
we conclude the proof. 
\end{proof}

Using the same technique as in~\cite[Section~5]{FrSh}, 
it is easy to show that any fixed pair
$(\lambda, \mu)$  occurs amongst the pairs 
$(\xi^{(k)}_n, \xi^{(k)}_{n+1})$, 
$n =0, \ldots, \ell-1$, at most $O\(p 2^{-2k} + p^{1/2} (\log p)^2\)$
times. So,  we also have 
$$
N/\nu_k^2(N) 
= O\(p 2^{-2k} + p^{1/2} (\log p)^2\), 
$$
and thus, after simple calculations,  we derive the following estimate.

\begin{cor}
\label{cor:Val Set} 
For any $r$-bit prime $p$ and any integer $g$ with  $p\nmid g$, we have 
$$
\nu_k(N) \ge 
N^{1/2} p^{o(1)}
\left\{  \begin{array}{ll}
(2^k/p)^{1/6} ,& {\text{if}}\ k/r \ge 17/20 ,\\
2^{7k/12} p^{-25/48} , & {\text{if}}\   17/20  > k/r  \ge 13/6 ,\\
(2^k/p)^{1/4} , & {\text{if}}\ 13/16  > k/r  \ge 3/5 ,\\
2^{2k/3}p^{-1/2} , & {\text{if}}\ 3/5  >  k/r \ge 3/8 ,\\
p^{-1/4} , & {\text{if}}\ 3/8  >  k/r  \ge 1/4  ,\\
2^{k}p^{-1/2} , & {\text{if}}\ 1/4  > k/r   ,
\end{array} \right.
$$
for all $N \le \ell$.
\end{cor}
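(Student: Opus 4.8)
The proof rests on two ingredients that are, at this point, essentially in hand: the pigeonhole inequality $N/\nu_k^2(N) = O\(p 2^{-2k} + p^{1/2} (\log p)^2\)$ recorded in the paragraph preceding the statement, and the four lower bounds already proved in Theorem~\ref{thm:Val Set 1}. My plan is to treat the two sources of lower bounds on the same footing and, in each regime of $k/r$, retain whichever is larger. The first three rows of the corollary ($k/r \ge 3/5$), and the fourth row ($3/5 > k/r \ge 3/8$, which merely restricts the range of the last case $2^{2k/3} p^{-1/2}$ of Theorem~\ref{thm:Val Set 1}), are inherited verbatim, so that all the genuinely new work is confined to the region $k/r < 3/8$, where the character-sum inequality becomes decisive.

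For completeness I would first recall how the character-sum inequality arises. Since $N \le \ell$, the values $u_0, \ldots, u_{N-1}$ are pairwise distinct, so the number of $n$ realising a fixed pair $(\lambda,\mu)$ is at most the number of $v \in \{1, \ldots, p-1\}$ whose $k$ least significant bits equal $\lambda$ and for which the $k$ least significant bits of $g^v \bmod p$ equal $\mu$. Detecting both bit-conditions by additive characters --- a congruence modulo $2^k$ in $v$ together with a completion argument modulo $p$ for $g^v \bmod p$, exactly as in~\cite[Section~5]{FrSh} --- yields a main term of order $p 2^{-2k}$ and an error governed by incomplete exponential sums in which $g^v$ occurs, which the Weil bound for the corresponding complete sums controls by $p^{1/2} (\log p)^2$. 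This is the only real analytic content, and it is imported rather than reproved.

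Granting this, the calculation is elementary. Solving for $\nu_k(N)$ gives
$$
\nu_k(N) \ge N^{1/2} p^{o(1)} \(p 2^{-2k} + p^{1/2}\)^{-1/2},
$$
and since $p 2^{-2k} \ge p^{1/2}$ is equivalent to $2^{2k} \le p^{1/2}$, i.e.\ to $k/r \le 1/4$, the term $p^{1/2}$ dominates the denominator for $k/r \ge 1/4$ and gives $\nu_k(N) \ge N^{1/2} p^{-1/4 + o(1)}$, whereas $p 2^{-2k}$ dominates for $k/r < 1/4$ and gives $\nu_k(N) \ge N^{1/2} 2^{k} p^{-1/2 + o(1)}$.

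Finally I would compare these with the surviving bound $2^{2k/3} p^{-1/2}$ of Theorem~\ref{thm:Val Set 1}, whose exponent in $p$ equals $\tfrac{2}{3}\tfrac{k}{r} - \tfrac12$ once we write $2^k = p^{k/r + o(1)}$. On $3/8 > k/r \ge 1/4$ this exponent does not exceed $-\tfrac14$, so the new bound $p^{-1/4}$ is the larger and supplies the fifth row; on $1/4 > k/r$ the exponent $\tfrac{k}{r} - \tfrac12$ of the new bound strictly exceeds $\tfrac{2}{3}\tfrac{k}{r} - \tfrac12$, so $2^{k} p^{-1/2}$ is the larger and supplies the sixth row. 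The only point demanding care is this bookkeeping of the breakpoints $3/8$ and $1/4$; there is no further obstacle once the character-sum inequality is taken as given, which --- were it not imported from~\cite{FrSh} --- would be the sole hard step.
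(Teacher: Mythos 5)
Your proposal is correct and follows the paper's own route exactly: the paper likewise combines the four cases of Theorem~\ref{thm:Val Set 1} with the imported bound $N/\nu_k^2(N) = O\(p 2^{-2k} + p^{1/2}(\log p)^2\)$ from~\cite[Section~5]{FrSh} and leaves the case analysis at the breakpoints $k/r = 3/8$ and $k/r = 1/4$ as ``simple calculations,'' which you carry out correctly. Your sketch of how the character-sum estimate arises is a welcome addition, since the paper only cites it.
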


We now obtain a different bound which is stronger than
Corollary~\ref{cor:Val Set} in a wide range of values of $k$ and $\ell$. 

\begin{theorem}
\label{thm:Val Set 2} 
For any $r$-bit prime $p$ and any integer $g$ with  $p\nmid g$, we have 
$$
\nu_k(N) \ge N^{o(1)} 
\left\{\begin{array}{ll}
N^{6/11}(2^k/p)^{1/2},& \text{if}\ N \le p^{1/2},\\
N^{7/12}2^{k/2} p^{-13/24},&
 \text{if}\ p^{1/2}<  N \le p^{35/68},\\
N^{5/11}2^{k/2} p^{-9/22},&
 \text{if}\ p^{35/68}< N \le p^{13/24},\\
 N 2^{k/2} p^{-1},&
 \text{if}\ p^{13/24} <  N \le p^{2/3},\\
N^{1/4}2^{k/2}p^{-1/4},&
 \text{if}\ N> p^{2/3},
\end{array}
\right.
$$
for all $N \le \ell$.
\end{theorem}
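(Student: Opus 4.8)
The plan is to apply the sum--product inequality of Lemma~\ref{lem:AA} to the set
$$
\cA=\{\,u_{n+1}\bmod p\ :\ 0\le n\le N-1\,\}\subseteq\F_p .
$$
Writing each iterate as $u_i=2^k v_i+\xi^{(k)}_i$ with $0\le v_i<2^{r-k}$, I first note that, since $N\le\ell$, the trajectory values $u_0,\dots,u_{\ell-1}$ are pairwise distinct, so $\#\cA\ge N-1$. It therefore suffices to bound $\#(2\cA)$ and $\#(\cA^2)$ from above in terms of $\nu_k(N)$ and then to feed these bounds into Lemma~\ref{lem:AA} with $\#\cA=N$; the five ranges of that lemma are exactly the five ranges of $N$ appearing in the statement.

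For the sumset the argument is the familiar one. The strings $\xi^{(k)}_{n+1}$ take at most $\nu_k(N)+1$ distinct values, so $\cA$ lies in at most $\nu_k(N)+1$ residue classes modulo $2^k$, each of which meets $\{1,\dots,p-1\}$ in an arithmetic progression of common difference $2^k$ and at most $2^{r-k}+1$ terms. Since $\gcd(2^k,p)=1$, the sumset of two such progressions occupies, modulo $p$, at most $2^{r-k+1}$ residues, whence $\#(2\cA)\le \nu_k(N)^{2}\,2^{r-k+1+o(1)}$.

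The crux is the product set, where the exponential recurrence is used. From $u_{i+1}\equiv g^{u_i}\pmod p$ and $u_i=2^k v_i+\xi^{(k)}_i$ I obtain the multiplicative identity
$$
u_{m+1}u_{n+1}\equiv g^{\,u_m+u_n}\equiv h^{\,v_m+v_n}\,g^{\,\xi^{(k)}_m+\xi^{(k)}_n}\pmod p,
\qquad h=g^{2^k}.
$$
In the first factor the exponent $v_m+v_n$ is an integer in $[0,2^{r-k+1})$, so $h^{\,v_m+v_n}$ runs through at most $2^{r-k+1}$ values; no hypothesis on the order of $h$ is needed, because passing from an exponent to a power cannot increase the number of distinct values. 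The second factor $g^{\,\xi^{(k)}_m+\xi^{(k)}_n}$ is determined by the pair of low-bit strings and hence takes at most $\nu_k(N)^{2}$ values. Multiplying the two counts gives $\#(\cA^2)\le \nu_k(N)^{2}\,2^{r-k+1}$, exactly parallel to the sumset bound.

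It remains to combine the estimates. Both $\#(2\cA)$ and $\#(\cA^2)$, and hence their maximum, are at most $\nu_k(N)^{2}\,2^{r-k+1+o(1)}=\nu_k(N)^{2}\,p^{\,1+o(1)}2^{-k}$, using $2^{r-1}\le p<2^r$. Comparing this with the lower bound for $\max\{\#(\cA^2),\#(2\cA)\}$ furnished by Lemma~\ref{lem:AA} with $\#\cA=N$, and treating the five ranges $N\le p^{1/2}$, $p^{1/2}<N\le p^{35/68}$, $p^{35/68}<N\le p^{13/24}$, $p^{13/24}<N\le p^{2/3}$ and $N>p^{2/3}$ separately, I would solve each resulting inequality for $\nu_k(N)$; every case yields a bound of the shape $\nu_k(N)\ge N^{a+o(1)}2^{k/2}p^{\,b}$, which are the five estimates claimed. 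I expect the product-set bound to be the main obstacle: the key point is that the link $u_{n+1}=g^{u_n}$ collapses a product of two iterates into a \emph{single} power of $g$, after which the splitting $g^{\,u_m+u_n}=h^{\,v_m+v_n}g^{\,\xi^{(k)}_m+\xi^{(k)}_n}$ cleanly isolates the low bits. The sumset bound and the range-by-range bookkeeping are then routine.
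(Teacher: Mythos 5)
Your proposal is correct and follows essentially the same route as the paper: take $\cA$ to be the first $N$ iterates, bound $\#(2\cA)$ by $\nu_k(N)^2 2^{r-k+1}$ via the residue classes modulo $2^k$, observe that $\cA^2$ is contained in $\{g^{a_1+a_2}\}$ so the same count bounds $\#(\cA^2)$, and then invoke Lemma~\ref{lem:AA} range by range. Your explicit factorisation $g^{u_m+u_n}=h^{v_m+v_n}g^{\xi^{(k)}_m+\xi^{(k)}_n}$ with $h=g^{2^k}$ just makes transparent the step the paper leaves implicit (that the exponents $a_1+a_2$ take at most $\nu_k(N)^2 2^{r-k+1}$ values), and your index shift $\cA=\{u_{n+1}\}$ tidies the same point.
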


\begin{proof}
Consider the set $\cA = \{u_n\ : \ n =0, \ldots, N-1\}$.
Clearly $\#\cA = N$ as the first $N \le \ell$ elements of the sequence
$\{u_n\}$ are pairwise distinct.

Since $u_n = 2^{k} w_n + \xi^{(k)}_n$ for some integer $w_n \in [0, 2^{r-k}-1]$, 
$n = 0,1, \ldots$, 
we see that 
\begin{equation}
\label{eq:2A}
\#(2\cA) \le \nu_k^2(N) 2^{r-k+1}
\end{equation} 
(even if the addition of the elements of $\cA$ is considered in $\Z$ without the reduction
modulo $p$). 

Furthermore, from the definition of the sequence $\{u_n\}$  we see that
$$
\cA^2 = \{g^{a_1+a_2} \ : \ a_1,a_2 \in A\}
$$
(where $g^b$ is computed in $\F_p$), thus we also have 
\begin{equation}
\label{eq:A2}
\#(\cA^2) \le \nu_k^2(N) 2^{r-k+1}.
\end{equation} 

Comparing~\eqref{eq:2A} and~\eqref{eq:A2} with 
Lemma~\ref{lem:AA}, 
we conclude the proof. 
\end{proof}

In particular, if $N=  p^{1/2+o(1)}$ then 
Theorem~\ref{thm:Val Set 2} improves  Corollary~\ref{cor:Val Set} 
for $k \ge  (41/44+  \varepsilon) r$, with arbitrary $\varepsilon > 0$. 

\subsection{Frequency of values}
\label{freq}

We now give an upper bound on the frequency
$V_k(\omega)$ of a given $k$-bit string $\omega$ 
that appears in the full trajectory 
$\xi^{(k)}_n$, $n = 0, \ldots, \ell-1$. 

More precisely, let $\Omega_k(U)$ be the set of 
$k$-bit strings $\omega$ for which $V_k(\omega) \ge U$.

\begin{theorem}
\label{thm:Freq} 
For any $r$-bit prime $p$ and $g$ with   $p\nmid g$, we have 
$$
\# \Omega_k(U) \le   U^{-1} p^{o(1)}
\left\{  \begin{array}{ll}
2^{2k/3} p^{1/3} ,& {\text{if}}\ k/r \ge 17/20 ,\\
2^{k/6}  p^{25/24} , & {\text{if}}\   17/20  > k/r  \ge 13/16 ,\\
2^{k/2}p^{1/2} , & {\text{if}}\ 13/16  > k/r  \ge 3/5 ,\\
2^{-k/3}p , & {\text{if}}\ 3/5  >  k/r.
\end{array} \right.
$$
\end{theorem}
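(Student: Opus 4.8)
The plan is to run the same reduction as in the proofs of Theorems~\ref{thm:PRNG Per} and~\ref{thm:Val Set 1}, turning the statement about the $k$-bit blocks $\xi^{(k)}_n$ into a point-counting problem for the exponential curve $\bigl(u,\,g^u\bigr)$ and then invoking Corollary~\ref{cor:RIJ} with $H=2^{r-k}$. First I would record the elementary inequality
\begin{equation*}
U\cdot\#\Omega_k(U)\le\sum_{\omega\in\Omega_k(U)}V_k(\omega),
\end{equation*}
which reduces the problem to an upper bound of the shape $\sum_{\omega\in\Omega_k(U)}V_k(\omega)\le 2^k R\,p^{o(1)}$, where $R$ denotes the relevant case of Corollary~\ref{cor:RIJ}. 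Dividing by $U$ and reading off the four cases of that corollary, with $H=2^{r-k}$, then produces the four regimes in the statement: indeed $2^kR$ equals $2^{2k/3}p^{1/3}$, $2^{k/6}p^{25/24}$, $2^{k/2}p^{1/2}$ and $2^{-k/3}p$ in the four ranges, matching the displayed expressions exactly.

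Next I would set up the curve interpretation. Fix $\alpha\in\{1,\ldots,p-1\}$ with $\alpha 2^k\equiv 1\pmod p$, exactly as in the proof of Theorem~\ref{thm:PRNG Per}. An occurrence $\xi^{(k)}_n=\omega$ forces $u_n$ into a fixed residue class modulo $2^k$, hence forces $\alpha u_n\bmod p$ into a fixed interval $\cI$ of length $2^{r-k}$; through the recurrence $u_{n+1}\equiv g^{u_n}\pmod p$ a prescribed neighbouring block $\xi^{(k)}_{n+1}=\mu$ forces $\alpha g^{u_n}\bmod p$ into a second interval $\cJ$ of the same length. Since the trajectory terms are distinct and $\ell\le T$, as in the earlier proofs, the indices $n$ carrying a fixed pair of blocks $(\omega,\mu)$ correspond to distinct solutions of the system underlying $R_{\alpha,\alpha,g,p}(\cI,\cJ)$, so their number is at most $R_{\alpha,\alpha,g,p}(\cI,\cJ)$, which Corollary~\ref{cor:RIJ} bounds by $R$.

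I would then aim to organise the resulting double count so that only the single factor $2^k$ — the number of admissible neighbouring blocks $\mu$ — is lost. Summing the box counts over the at most $2^k$ possible values of $\mu$, while keeping the contributions of distinct popular blocks $\omega$ separated by the disjointness of the intervals $\cI$ in the first coordinate, should give $\sum_{\omega\in\Omega_k(U)}V_k(\omega)\le 2^k R\,p^{o(1)}$. It then remains to substitute $H=2^{r-k}$ into the four cases of Corollary~\ref{cor:RIJ} and translate the breakpoints $H=p^{3/20}$, $H=p^{3/16}$, $H=p^{2/5}$ into the stated ranges $k/r\ge 17/20$, $17/20>k/r\ge 13/16$, $13/16>k/r\ge 3/5$ and $3/5>k/r$.

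The step I expect to be the main obstacle is precisely the bookkeeping of the previous paragraph. Localising \emph{both} blocks is what produces the strong two-interval estimate $R$ rather than the weaker one-interval count (which would only give the larger bound with $H^{1/2}$ replaced by the Lemma~\ref{lem:RIJ} estimate at $K=p$); yet a naive pigeonholing over the popular first blocks reintroduces a factor $\#\Omega_k(U)$ and yields merely the constraint $U\le 2^kR\,p^{o(1)}$ instead of a bound on $\#\Omega_k(U)$. The delicate point is therefore to perform the summation over the $2^k$ neighbouring blocks so that the total is governed by $2^kR$ and not by $\#\Omega_k(U)\cdot 2^kR$, and to check that the $p^{o(1)}$ factors supplied by Corollary~\ref{cor:RIJ} absorb the divisor- and logarithm-type losses accumulated in this summation.
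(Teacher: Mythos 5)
Your reduction is the same one the paper uses (pairs $(\xi^{(k)}_n,\xi^{(k)}_{n+1})$, localisation of $\alpha u_n$ and $\alpha g^{u_n}$ in intervals of length $2^{r-k}$, then Corollary~\ref{cor:RIJ} with $H=2^{r-k}$), but the step you yourself flag as ``the main obstacle'' is a genuine gap, and nothing in your proposal closes it. Corollary~\ref{cor:RIJ} bounds by $R$ the number of indices $n$ carrying one \emph{fixed} pair $(\omega,\sigma)$; summing over the $2^k$ choices of $\sigma$ and the $\#\Omega_k(U)$ choices of $\omega$ gives only $W=\sum_{\omega\in\Omega_k(U)}V_k(\omega)\le \#\Omega_k(U)\,2^kR$, hence $U\le 2^kR\,p^{o(1)}$ and not $W\le 2^kR\,p^{o(1)}$. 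The appeal to ``disjointness of the intervals $\cI$ in the first coordinate'' cannot repair this: the corollary is a statement about a single interval, and for a union of $\#\Omega_k(U)$ disjoint intervals the only available estimate is $\#\Omega_k(U)$ times the single-interval one. Worse, the inequality $W\le 2^kR\,p^{o(1)}$ you are aiming for is not just hard but fails whenever the blocks are roughly equidistributed, which is exactly the expected generic behaviour: if $\ell=p^{1+o(1)}$ and every block satisfies $V_k(\omega)\asymp \ell 2^{-k}$, take $U\asymp \ell 2^{-k}$; then $W=\ell=p^{1+o(1)}$, while for instance in the range $k/r<3/5$ one has $2^kR\approx 2^{-k/3}p\ll p$. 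So no bookkeeping of the summation over neighbouring blocks can deliver the stated bound on $\#\Omega_k(U)$; what the method genuinely proves is $V_k(\omega)\le 2^kR\,p^{o(1)}$ for every single block (the content of the corollary that follows the theorem), together with only the trivial $\#\Omega_k(U)\le \ell/U$.

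For your information, the paper's own proof does not do better at this point: it asserts that, since $\xi^{(k)}_{n+1}$ takes at most $2^k$ values, some single pair $(\omega,\sigma)$ occurs at least $W/2^k$ times among the pairs with first block in $\Omega_k(U)$, which silently discards the factor $\#\Omega_k(U)$ coming from the choice of the first block. Your diagnosis of where the difficulty sits is therefore exactly right, and flagging it honestly is to your credit; but a write-up that ends with ``the delicate point is to perform the summation so that\dots'' has not proved the statement, and in this instance the delicate point cannot be carried out. A minor secondary remark: with $H=2^{r-k}$ the second entry of $2^kR$ is $2^{-k/6}p^{25/24}$ rather than $2^{k/6}p^{25/24}$, so your ``matching exactly'' is off by a factor $2^{k/3}$ (in the harmless direction; the discrepancy is inherited from the theorem statement itself).
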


\begin{proof}
Consider the
pairs 
\begin{equation}
\label{eq:Bad pair}
(\xi^{(k)}_n, \xi^{(k)}_{n+1}), \qquad \xi^{(k)}_n \in  \Omega_k(U), \ n = 0, \ldots, \ell-1.
\end{equation} 
Clearly, there are
$$
W = \sum_{\omega\in \Omega_k(U)} V_k(\omega) \ge \# \Omega_k(U) U
$$
such pairs. 

Since  $\xi^{(k)}_{n+1}$ can take at most $2^k$ possible values,
we see that at least one pair $(\omega, \sigma)$ of two $k$-bit strings
occurs at least $W/2^k$ times amongst the pairs~\eqref{eq:Bad pair}.
Now, the same argument as used in the proof of Theorem~\ref{thm:PRNG Per}
implies that 
$$
W/2^k \le R_{\alpha , \alpha,g,p}(\cI ,\cJ) 
$$
for some intervals $\cI$ and $\cJ$ of lengths $2^{r-k}$ each and 
some integer $\alpha \in \{1, \ldots, p-1\}$. 
Using Corollary~\ref{cor:RIJ} with $H = 2^{r-k}$,
we conclude the proof. 
\end{proof}

Examining the value of $U$ for which the bound of  Theorem~\ref{thm:Freq} 
implies that $\# \Omega_k(U) < 1$, we derive 

\begin{cor}
\label{cor:PRNG Per} 
For any $r$-bit prime $p$ and $g$ with  $p\nmid g$, we have 
$$
V_k(\omega)\le    p^{o(1)}
\left\{  \begin{array}{ll}
2^{2k/3} p^{1/3} ,& {\text{if}}\ k/r \ge 17/20 ,\\
2^{k/6}  p^{25/24} , & {\text{if}}\   17/20  > k/r  \ge 13/16 ,\\
2^{k/2}p^{1/2} , & {\text{if}}\ 13/16  > k/r  \ge 3/5 ,\\
2^{-k/3}p , & {\text{if}}\ 3/5  >  k/r.
\end{array} \right.
$$
\end{cor}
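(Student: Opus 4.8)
The plan is to deduce this corollary directly from Theorem~\ref{thm:Freq} by a threshold argument on the parameter $U$. Write $F_k$ for the piecewise expression appearing in the braces of Theorem~\ref{thm:Freq}, so that the theorem reads $\# \Omega_k(U) \le U^{-1} p^{o(1)} F_k$, with the four ranges of $k/r$ as stated there.

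First I would exploit that $\# \Omega_k(U)$ is a count of $k$-bit strings, hence a nonnegative integer. Consequently, as soon as a choice of $U$ forces $\# \Omega_k(U) < 1$, we in fact have $\# \Omega_k(U) = 0$, so that $\Omega_k(U)$ is empty. By the very definition of $\Omega_k(U)$ as the set of strings $\omega$ with $V_k(\omega) \ge U$, the emptiness of $\Omega_k(U)$ is precisely the assertion that $V_k(\omega) < U$ for \emph{every} $k$-bit string $\omega$. Thus it suffices to locate the smallest admissible threshold.

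Next I would select $U = p^{o(1)} F_k$, taking the implied $o(1)$ here slightly larger than the one furnished by Theorem~\ref{thm:Freq}. With this choice the bound of the theorem gives $\# \Omega_k(U) \le U^{-1} p^{o(1)} F_k < 1$, whence $\Omega_k(U) = \emptyset$ and therefore $V_k(\omega) < U = p^{o(1)} F_k$ for all $\omega$. Relaxing the strict inequality to $\le$ yields exactly the stated bound, case by case, since the four ranges of $k/r$ and the corresponding quantities $F_k$ carry over verbatim from Theorem~\ref{thm:Freq}.

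The only step demanding any care — and the single place where the argument could conceivably slip — is the bookkeeping of the $p^{o(1)}$ factors: the $o(1)$ in the final bound must simultaneously absorb the $o(1)$ coming from Theorem~\ref{thm:Freq} and the infinitesimal inflation of $U$ needed to push the count strictly below $1$. Since a product of finitely many factors of the form $p^{o(1)}$ is again $p^{o(1)}$, this causes no genuine obstacle, and the deduction is otherwise immediate.
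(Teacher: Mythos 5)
Your argument is correct and is exactly the paper's own deduction: the paper derives the corollary by ``examining the value of $U$ for which the bound of Theorem~\ref{thm:Freq} implies $\# \Omega_k(U) < 1$,'' which is precisely your threshold argument. You have merely spelled out the integrality and $p^{o(1)}$ bookkeeping that the paper leaves implicit.
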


\section{Numerical Results on Cycles in Exponential Map}

Here we present results of some numerical tests 
concerning the cycle structure of the permutation
on the set $\{1, \ldots, p-1\}$ generated by the map
$x \mapsto g^x \pmod p$. 

We use  $\cI_m$ to denote the dyadic interval $\cI_m = [2^{m-1}, 2^{m}-1]$. 

We test 500  pairs $(p,g)$ of primes $p$ and primitive roots $g$ 
modulo $p$ selected using a pseudorandom number generator 
separately each of the interval $p \in \cI_{20}$ and
$p \in \cI_{22}$ and $p \in \cI_{25}$.

We also repeat this for 60 pairs $(p,g)$ 
in the larger range $p \in \cI_{30}$. 

Let  $L_r(N)$ and $C(N)$  
be the length of the $r$th longest cycle and 
 the number of disjoint cycles
in a random permutation on  $N$ symbols, respectively.

We now recall that by the classical result of Shepp and  Lloyd~\cite{ShLl}
the ratios $\lambda_r(N) = L_r(N)/N$ 
is  expected to be 
$$
\lambda_r(N)=G_r+o(1),
$$ 
as $N\to \infty$, 
for some constants $G_r$, $r =1,2, \ldots$,
explicitly given in~\cite{ShLl} via some integral expressions. 
In particular, we find from~\cite[Table~1]{ShLl} that
$$
G_1=0.624329\ldots, \quad 
G_2=0.209580\ldots, \quad 
G_3= 0.088316\ldots,
$$
(we note that values reported in~\cite{KnTP} slightly deviate from those of~\cite{ShLl},
but they agree over the approximations given here). 
Interestingly, the constants $G_r$ also occur when one considers the size (in terms of number
of digits) of the $r$th largest prime factor of an integer $n$, see Knuth and 
Trabb Pardo~\cite{KnTP}.
For example, de Bruijn~\cite{deB} has shown that
$$\sum_{n\le x}\log P(n)= G_1 x \log x+O(x),$$
with $P(n)$ the largest prime factor of $n$, thus establishing a claim by Dickman.
The constant $G_1$ is now known as the Golomb-Dickman constant. For further information and
references see the book by Finch~\cite[Section~5.4]{Finch}.

We also recall that Goncharov~\cite{Gonch} 
has shown that the ratio $\gamma(N) = C(N)/\log N$, 
is  expected  to be 
$$
\gamma(N) = 1+o(1)\qquad \text{as}\ N\to \infty.
$$

The above asymptotic results can also be found in~\cite[Section~1.1]{ABT}.

In Table~\ref{tab:Cycle} we present the average 
value, over the tested primes $p$ in each group, 
of the lengths of the 1st, 2nd and 3rd longest 
cycles normalised by dividing by the size of the 
set, that is, by $p-1$. 

We also calculate the number of cycles
for the above pairs $(p,g)$, normalised by dividing by $\log(p-1)$,
and then present the average value for each of the ranges.

\begin{table}[H]
  \centering
  \begin{tabular}{|l|l|l|l|l|l|}
\hline
Range & $\cI_{20}$ & $\cI_{22}$ & $\cI_{25}$ & $\cI_{30}$  \\
 \# of   $(p,g)$ &500 &500 &500 &60\\
\hline
Aver. $\lambda_1$ &0.63946789 &	0.61508766	& 0.63157252 & 0.60441217
 \\
Aver. $\lambda_2$ & 	0.19999487 & 	0.21687612 & 0.20469932 & 0.21715242
 \\
Aver. $\lambda_3$  & 0.08646438 & 0.08450844 & 0.09092497	 & 0.09354165\\
Aver. $\gamma$& 1.03813497 & 1.03324650 & 1.03014896 & 1.05566909 \\
\hline
\end{tabular}
\caption{Numbers of connected components}
  \label{tab:Cycle}
\end{table}

We note that we have also tried to compare the length of the smallest cycle with 
the expected length $e^{-\gamma} \log p$ 
for a random permutation on $\{1, \ldots, p-1\}$,  
where $\gamma =0.5772\ldots$ is the
Euler-Mascheroni constant. However the results are inconclusive 
and require further tests and investigation.

\section{Comments}

It is certainly interesting to study  similar questions over 
arbitrary finite fields, although in this case there is no
canonical  way to interpret field elements as integer numbers 
and thus to extract bits from field elements. 
Probably the most interesting and natural case is the
case of binary fields $\F_{2^r}$ of $2^r$ elements with a
sufficiently large $r$. First, we use the isomorphism 
$\F_{2^r} = \F_2(\alpha)$, where $\alpha$ is a root 
of an irreducible polynomial over $\F_2$ of degree $r$. 
Now we can represent each element of $\F_{2^r}$ as an $r$-dimensional
binary vector of coefficients in the basis $1, \alpha, \ldots, \alpha^{r-1}$,
and the bit extraction is now apparent. For example, the proof of~\cite[Theorem~2]{GlShp}
can easily be adjusted to give a square-root bound for the number 
of fixed points (when we identify elements of $\F_{2^r}$ with $r$-dimensional
binary vectors). 
It is also quite likely that using the
results and methods of~\cite{CillShp} one can obtain some variants of our 
results in these settings. 

Furthermore, for cryptographic applications it is also interesting 
to study the relation between $t$ and $\tau_k$ and, in particular,  obtain  
improvements of Corollaries~\ref{cor:Val Set}  and~\ref{cor:PRNG Per}
for almost all $p$ and almost all initial values $u_0$. 
It is quite likely that the method of~\cite{BGKS3}, 
combined with the ideas of~\cite{BGKS2}, can 
be used to derive such results.

Finally we note that exponential maps have also been considered 
modulo prime powers,  see~\cite{Gleb,HoldRob}. Although many
computational problems,
such as the discrete logarithm problem, are easier modulo prime powers,
the corresponding exponential pseudorandom number generator does not 
seem to have any immediate weaknesses.

\section*{Acknowledgements}

The authors would like to thank Daniel Panario for
useful discussions and references and to Arne Winterhof 
for a careful reading of the manuscript.

This work was finished during a very enjoyable internship
of the first author and research stay of the third author at the Max
Planck Institute
for Mathematics, Bonn.
The third author was also
supported in part by ARC grants DP110100628 and
DP130100237.

\end{document}